\newcommand{\HOX}[1]{\todo[noline, size=\footnotesize]{#1}}
\providecommand\@dotsep{5}\def\listtodoname{List of Todos}\def\listoftodos{\hypersetup{linkcolor=black}\@starttoc{tdo}\listtodoname\hypersetup{linkcolor=blue}}\makeatother
\definecolor{shaderulecolor}{rgb}{0.651,0.074,0.090}
\newtheorem{lemma}[theorem]{Lemma}
\newtheorem{proposition}[theorem]{Proposition}
\newtheorem{corollary}[theorem]{Corollary}
\theoremstyle{definition}
\newtheorem{definition}[theorem]{Definition}
\theoremstyle{remark}
\newtheorem{remark}[theorem]{Remark}
\def\R{\mathbb R}
\DeclareMathOperator{\ND}{\text{\Large$\triangleleft$}}
\DeclareMathOperator{\NS}{\underline{\text{\Large$\triangleleft$}}}
\renewcommand{\leq}{\leqslant}
\renewcommand{\geq}{\geqslant}
\def\p{\partial}
\DeclareMathOperator{\supp}{supp}
\newcommand{\pair}[1]{g(#1)}
\newcommand*\KN{\mathpalette\@KN\relax}
\newcommand*\@KN[1]{%
  \mathbin{%
    \ooalign{%
      $#1\m@th\bigcirc$\cr
      \hidewidth$#1\m@th\wedge$\hidewidth\cr
    }%
  }%
}
\DeclareMathOperator{\Hess}{Hess}
\newcommand*\xbar[1]{%
   \hbox{%
     \vbox{%
       \hrule height 0.5pt 
       \kern0.5ex
       \hbox{%
         \ensuremath{#1}%
       }%
     }%
   }%
} 
\title[Lorentzian Calder\'{o}n problem near the Minkowski geometry]{Lorentzian Calder\'{o}n Problem near the Minkowski geometry}
\author[S. Alexakis]{Spyros Alexakis}
\address{Department of Mathematics, University of Toronto, 40 St George St, Toronto, ON, Canada M5S 2E4.}
\email{alexakis@math.toronto.edu}
\author[A. Feizmohammadi]{Ali Feizmohammadi}
\address{The Fields institute, 222 College St, Toronto, ON, Canada M5T 3J1.}
\email{afeizmoh@fields.utoronto.ca}
\author[L. Oksanen]{Lauri Oksanen}
\address{Department of Mathematics and Statistics, University of Helsinki, P.O 68, 00014, University of Helsinki, Finland.}
\email{lauri.oksanen@helsinki.fi}
\keywords{inverse problems, wave equation, unique continuation, Boundary Control method, curvature bounds, Lorentzian geometry.}
\begin{document}


\maketitle
\begin{abstract}
We study a Lorentzian version of the well-known Calder\'{o}n problem that is concerned with determination of lower order coefficients in a wave equation on a smooth Lorentzian manifold, given the associated Dirichlet-to-Neumann map. In the earlier work of the authors it was shown that zeroth order coefficients can be uniquely determined under a two-sided spacetime curvature bound and the additional assumption that there are no conjugate points along null or spacelike geodesics. In this paper we show that uniqueness for the zeroth order coefficient holds for manifolds satisfying a weaker curvature bound as well as spacetime perturbations of such manifolds. This relies on a new optimal unique continuation principle for the wave equation in the exterior regions of double null cones. In particular, we solve the Lorentzian Calder\'{o}n problem near the Minkowski geometry. 
\end{abstract}

\section{Introduction}
\subsection{Formulation of the problem}
We start with the geometric setup and let $(M,g)$ be a Lorentzian manifold of dimension $1+n$ with signature $(-,+,\ldots,+)$. We make the standing assumption that the manifold is of the form
\begin{equation}\label{cylinder}
M=[-T,T]\times M_0
\end{equation}
for some $T > 0$ and a compact connected manifold $M_0$ with a smooth boundary. The metric $g$ is assumed to be of the form
\begin{equation}\label{splitting} g(t,x)= c(t,x)\left(-dt^2+ g_0(t,x)\right),\quad \forall\, (t,x)\in M,\end{equation}
where $c$ is a smooth strictly positive function on $M$ and $g_0(t,\cdot)$ is a family of smooth Riemannian metrics on $M_0$ that depend smoothly on the variable $t\in [-T,T]$.
We remark that if a Lorentzian manifold with timelike boundary admits a global time function then it is isometric to a manifold of the form \eqref{cylinder}--\eqref{splitting}, see \cite[Appendix A]{AFO} for the details.

Let $V \in C^{\infty}(M)$. We consider the following wave equation on $(M,g)$, with the zeroth order coefficient given by $V$,
\begin{equation}\label{pf0}
\begin{aligned}
\begin{cases}
(\Box+V)u=0\,\quad &\text{on $M$},
\\
u=f\,\quad &\text{on $\Sigma=(-T,T)\times \p M_0$,}\\
u(-T,x)=\p_{t}u(-T,x)=0 \,\quad &\text{on $M_0$.}
\end{cases}
    \end{aligned}
\end{equation}
Here, the wave operator $\Box$ is given in local coordinates $(t=x^0,\ldots,x^n)$ on $M$ by
$$\Box u= -\sum_{j,k=0}^n \left|\det g \right|^{-\frac{1}{2}}\frac{\p}{\p x^j}\left(\left|\det g \right|^{\frac{1}{2}}g^{jk}\frac{\p }{\p x^k}\,u\right).$$
It is classical (see for example \cite[Theorem 4.1]{LLT}) that given any $f \in H^1_0(\Sigma)$, equation \eqref{pf0} admits a unique solution $u$ in the energy space
\begin{equation}
\label{natural_space}
C(-T,T;H^1(M_0))\cap C^1(-T,T;L^2(M_0)).
\end{equation}
Moreover, $\p_\nu u|_{\Sigma} \in L^2(\Sigma)$ where $\nu$ is the outward unit normal vector field on $\Sigma$. 

We define the Dirichlet-to-Neumann map, $\Lambda_{V}: H^1_0(\Sigma) \to L^2(\Sigma),$ by
\begin{equation}
\label{DNmap}
\Lambda_{V} f= \p_\nu u\big|_{\Sigma},
\end{equation}
where $u$ is the unique solution to \eqref{pf0} subject to the boundary value $f$ on $\Sigma$.

We are interested in the inverse problem of determining the coefficient $V$ from the knowledge of the Dirichlet-to-Neumann map $\Lambda_{V}$, or in other words, the question of injectivity of the map $V \mapsto \Lambda_{V}$. Following \cite{AFO}, we call this the Lorentzian Calder\'{o}n problem.

\subsection{Obstruction to uniqueness}
There is a natural obstruction to uniqueness for the coefficient $V$ that is due to finite speed of propagation for the wave equation. In order to discuss this obstruction, let us first fix some notation. We write $p \le q$ for points $p,q \in M$
if there is a piecewise smooth causal path on $M$ from $p$ to $q$ or $p=q$. Also, we write $p\ll q$ if there exists a future pointing piecewise smooth timelike
path on $M$ from $p$ to $q$. Using these relations, the causal future and past of
a point $p \in M$ is defined via
\begin{equation}
\label{causal_rel}
J^+(p) = \{q \in M \,:\, p \le q\} \quad \text{and}\quad J^-(p)= \{q \in M\,:\, q \le p\}.
\end{equation}
We also write $I^{\pm}(p)$ for the chronological future and past of the point $p$ that are defined as in \eqref{causal_rel} with $\le$ replaced by $\ll$. 

Using the above notation, we note that by finite speed of propagation for the wave equation, the map $\Lambda_{V}$ contains no information about the coefficient $V$ on the subset 
$$ \mathcal D= \{p \in M\,: J^+(p)\cap \Sigma =\emptyset \text{ or } J^-(p)\cap \Sigma =\emptyset\}.$$
We refer the reader to \cite[Section 1.1]{KO19} for the details. In order to remove this obstruction, we will assume throughout this paper that the time interval $T$ is large in comparison to the support of the coefficient $V$. 

\subsection{Main results}
Without any further assumptions on $(M,g)$, the Lorentzian Calder\'{o}n problem is wide open. We will solve the Lorentzian Calder\'{o}n problem for Lorentzian manifolds that satisfy certain abstract geometric assumptions. We also solve the Lorentzian Calder\'{o}n problem for $C^2(M)$-perturbations of such manifolds. Our abstract geometric assumptions are roughly of the following three types:
\begin{itemize}
\item{A curvature bound on the manifold.}
\item{An assumption that is reminiscent to the notion of simplicity in Riemannian geometry.}
\item{An assumption on the size of the final time $T$ compared to the support of the unknown coefficient $V$.}
\end{itemize}
We begin with the curvature bound. We assume that
\begin{enumerate}
\item[(H1)] {For any point $p\in M$, any spacelike vector $v\in T_pM$, and any null vector $N\in T_pM$ with $g(v,N)=0$, there holds $$g(R(N,v)v,N)\leq 0,$$}
\end{enumerate}
where $R$ stands for the curvature tensor on $(M,g)$. It can be readily verified that the above curvature condition is weaker than the curvature assumption imposed in our earlier work \cite{AFO}. Next, we discuss the simplicity assumption that was also used in our earlier work. Given any $p \in M$, we define the set 
\begin{equation}
\label{exterior_null}
\mathscr E_p = M \setminus (J^-(p)\cup J^+(p)),
\end{equation}
and call it the exterior of the double null cone emanating from the point $p$, see Figure~\ref{default}. We assume that
\begin{figure}
\includegraphics[trim={5cm 2cm 3cm 0},clip,width=0.3 \textwidth]{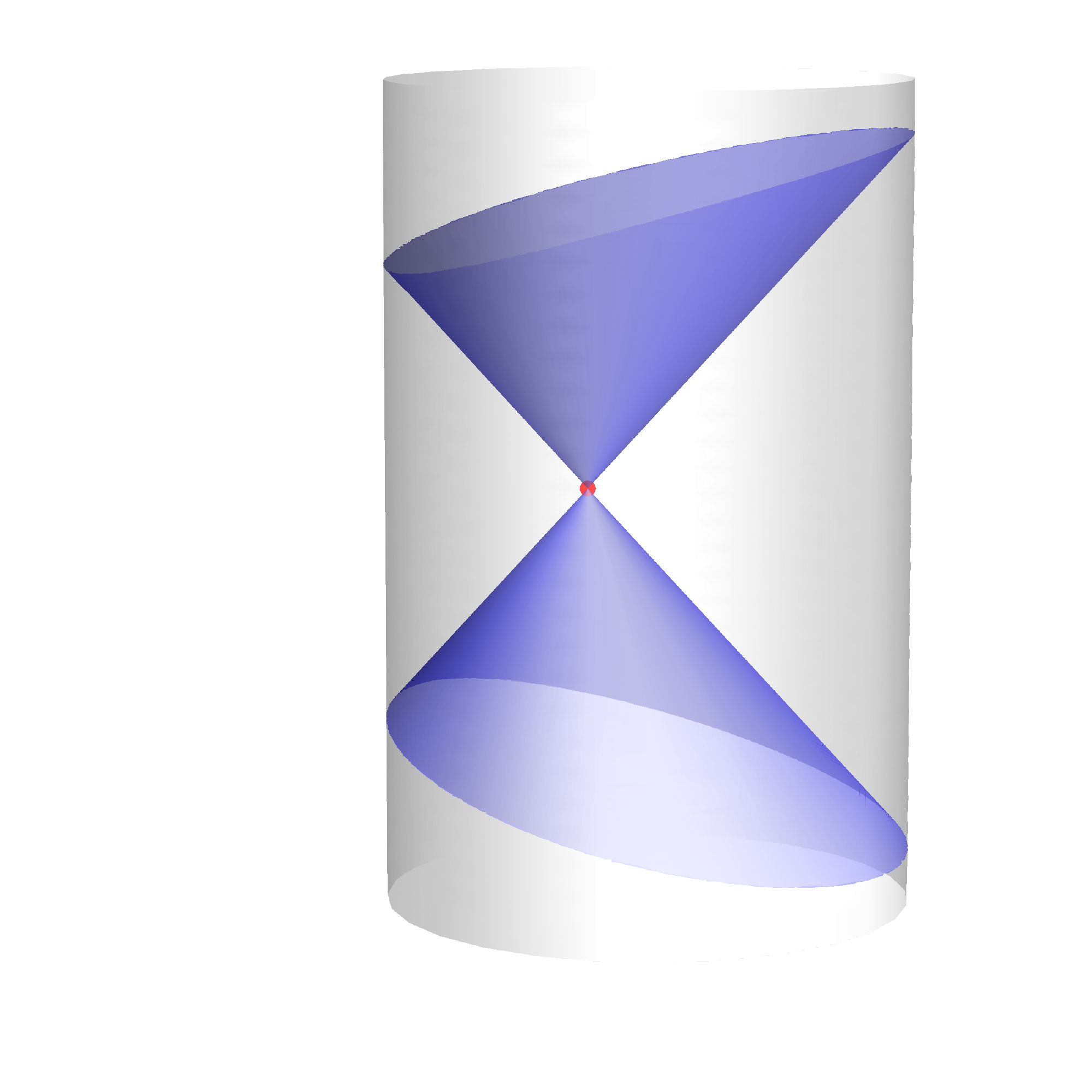}{\centering}
\caption{The schematic for the exterior of the double null cone in the setting of Minkowski geometry in $\R^{1+2}$. The point $p$ is shown in red, $\Sigma$ is gray and $\p \mathscr E_p \cap  M^{\textrm{int}}$ is shown in blue.}
    \label{default}
\end{figure}
\begin{enumerate}
\item[(H2)] For any null geodesic $\gamma$ and any two points $p,q$ on $\gamma$, the only causal path between $p$ and $q$ is along $\gamma$.
For all $p \in M$, the exponential map $\exp_p$
is a diffeomorphism from the spacelike vectors onto $\mathscr E_p$.
\end{enumerate}

In addition to (H1) and (H2) we need to make an assumption that removes the possibility of {\em trapped} null geodesics and imposes a condition on the size of the final time $T$. Precisely, we assume that
\begin{enumerate}
\item[(H3)] Given any $p \in  \supp V$, there holds $\mathscr E_p \cap \p M\subset \Sigma$. Moreover, there is $p_0 \in M^{\textrm{int}}\cap J^-(\supp V)$ such that 
$$\mathscr E_{p_0} \cap \p M \subset \Sigma\quad \text{and}\quad\mathscr E_{p_0} \cap \mathscr E_q =\emptyset,$$
for any $q \in \supp V$.
\end{enumerate}
The significance of the point $p_0$ is related to existence of a suitable exact controllability theory on time slices that are in the causal future of $p_0$. The reader should interpret $p_0$ as a fixed point that is in sufficiently distant past of the region where we are interested in recovering the unknown coefficient $V$. 

Finally, we assume that 
\begin{enumerate}
\item[(H4)] All null geodesics have at most first order of contact with the boundary.
\end{enumerate}
This minor technical condition is also related to the well-known geometric conditions of exact controllability of the wave equation on $M$, see \cite{BG,BLR}. We have the following uniqueness result for the Lorentzian Calder\'{o}n problem.

\begin{theorem}
\label{thm_main}
Let $(M,g)$ be a Lorentzian manifold of the form \eqref{cylinder}--\eqref{splitting}. For $j=1,2$, let $V_j \in C^{\infty}(M)$. Suppose that the assumptions (H1)--(H4) are satisfied. Let $\tilde g$ be a smooth Lorentzian metric on $M$ that lies in a sufficiently small $C^2(M)$-neighborhood of $g$. Let $\Lambda_{V_j}$, $j=1,2$, be defined as in \eqref{DNmap} corresponding to the wave equation \eqref{pf0} on $(M,\tilde g)$ with $V=V_j$. Suppose that
$$ \Lambda_{V_1}\,f= \Lambda_{V_2}\,f,\quad \forall\, f \in H^1_0(\Sigma).$$
Then $V_1=V_2$ on $M$.
\end{theorem}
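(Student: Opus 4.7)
The plan is to deploy a Boundary Control strategy adapted to the Lorentzian setting, following the broad outline of the authors' earlier work, but with a new unique continuation principle doing the essential work. As a first step, from $\Lambda_{V_1}=\Lambda_{V_2}$ one extracts a standard Blagoveshchensky-type orthogonality identity
$$\int_M (V_2-V_1)\, u_1\, u_2 \, dV_{\tilde g} = 0,$$
where $u_1$ solves $(\Box_{\tilde g}+V_1)u_1=0$ with vanishing Cauchy data in the past and $u_2$ solves $(\Box_{\tilde g}+V_2)u_2=0$ with vanishing Cauchy data in the future, both with compactly supported Dirichlet data on $\Sigma$. The goal is then to show that, as the boundary data vary, the products $u_1u_2$ span a set sufficiently rich to conclude $V_1=V_2$ pointwise.

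The heart of the proof is the new optimal unique continuation principle announced in the abstract: under (H1) and (H2), any finite-energy solution of the wave equation that vanishes on $\Sigma\cap\mathscr E_p$ must vanish on all of $\mathscr E_p$. Following the Tataru--Hörmander--Robbiano framework, I would establish this by constructing a foliation of a neighborhood of $\partial\mathscr E_p\cap M^{\mathrm{int}}$ by strongly pseudoconvex hypersurfaces for $\Box_{\tilde g}$. The natural candidate is a family of level sets of the Lorentzian arc length from $p$ along the spacelike geodesic rays supplied by (H2); the pseudoconvexity condition would be rewritten, via the Gauss--Codazzi equations for these level sets, as a sign condition on the Jacobi fields along those geodesics, which is then delivered by (H1). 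This is where I expect the main obstacle to lie: (H1) is strictly weaker than the two-sided bound of \cite{AFO}, and the pseudoconvexity identity must be organized so that the only curvature quantity that actually appears is $g(R(N,v)v,N)$ with $v$ spacelike and $N$ null tangential to the foliation, since this is precisely the tensorial contraction controlled by (H1).

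Armed with the UC principle, (H4) yields exact controllability of the wave equation on $(M,\tilde g)$ from $\Sigma$ via the Bardos--Lebeau--Rauch theorem, and a Hahn--Banach/duality argument (using Step~2 to rule out the orthogonal complement) upgrades this to density of the products $u_1u_2$ in $L^2(\mathscr E_{p_0})$ for any admissible reference point $p_0$. Now given any $q\in\supp(V_2-V_1)$, assumption (H3) provides a $p_0\in J^-(\supp V)\cap M^{\mathrm{int}}$ such that $\mathscr E_{p_0}$ meets $\partial M$ only inside $\Sigma$ and is disjoint from every $\mathscr E_r$ with $r\in\supp V$. Testing the orthogonality identity against a dense family of products on $\mathscr E_{p_0}$ forces $V_2-V_1$ to vanish on $\mathscr E_{p_0}$; a short geometric argument, using that $\partial\mathscr E_{p_0}$ is ruled by null geodesics and that $q$ lies on one such geodesic, then propagates the vanishing to $q$, giving the desired contradiction.

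For the perturbation statement, I would verify that conditions (H1)--(H4), the Bardos--Lebeau--Rauch geometric control condition, and---most importantly---the quantitative pseudoconvexity estimate that underpins the UC principle of Step~2, are all stable under sufficiently small $C^2$ perturbations of the metric, with constants varying continuously in the $C^2$ topology. Because (H2) is the open condition of being simple in the sense described, and (H1), (H4) involve only transverse sign conditions on curvature and null geodesics, none of these fails upon small perturbation; the control constant varies continuously by the classical stability of BLR, and the Tataru estimate inherits uniform constants from the uniform pseudoconvex foliation. The entire argument then applies verbatim to $\tilde g$, yielding $V_1=V_2$ on $M$.
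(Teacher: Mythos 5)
Your overall architecture is the same as the paper's: the paper proves Theorem~\ref{thm_main} precisely by combining the new unique continuation principle (Theorem~\ref{thm_uc}) with the Boundary Control/controllability machinery of \cite[Sections 6--7]{AFO}, and your Steps 1 and 3 are a fair sketch of that machinery, while your Step 2 (foliation of $\mathscr E_p$ by level sets of the Lorentzian distance $r_p$, pseudoconvexity reduced to the curvature contraction $g(R(N,v)v,N)$) is exactly the paper's route to the UC theorem. However, there is a genuine gap at the heart of Step 2 and in your final perturbation paragraph: the hypothesis (H1) is a \emph{non-strict} inequality, and it cannot deliver the \emph{strict} pseudoconvexity that the H\"ormander--Carleman uniqueness theorem (Theorem 28.3.4 in \cite{Ho4}) requires. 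This is not a technicality: in the flat Minkowski case $r_p^2=-t^2+|x|^2$ and $\Hess r_p^2(N,N)=-2(N^0)^2+2|\vec N|^2=0$ for every null $N$, so the level sets of $r_p$ are only non-strictly pseudoconvex, and no Carleman estimate follows. For the same reason your claim that (H1) ``involves only transverse sign conditions'' and hence survives small $C^2$ perturbations is false: a non-strict inequality is not an open condition, and for Minkowski (where the curvature vanishes identically, so (H1) holds with equality) an arbitrarily small perturbation can violate it. Since perturbations of Minkowski are the headline case of the theorem, this breaks the argument exactly where it matters.

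The paper's remedy, which your proposal is missing, is a conformal normalization carried out \emph{before} perturbing: one replaces $g$ by $e^{2f}g$ with $f=\delta e^{\lambda t}$ (Lemma~\ref{lem_conf_strict}), which upgrades (H1) to the strict bound $(\textrm{H1})^\prime$ while preserving (H2); the strict bound \emph{is} open in the $C^2$ topology (Lemma~\ref{lem_conf_pert_0}, Corollary~\ref{cor_conf_pert}), which is what legitimately handles the metric $\tilde g$; and the conformal covariance of $\Box$ (Remark~\ref{remark_conf_curv}) transfers the UC statement back, since the conformal factor only contributes lower order terms absorbed into $X$. With $(\textrm{H1})^\prime$ in hand, strict pseudoconvexity of the level sets of $r_p^2$ is then obtained not by a routine Gauss--Codazzi/Jacobi-field argument but by a comparison theorem for the radial Riccati equation restricted to null directions (the ``null negative-definite'' comparison, Lemma~\ref{lem_null} and Proposition~\ref{prop}), which is the genuinely new analytic ingredient; the delicate point is that negativity is assumed only on null-orthogonal planes, so the standard comparison results do not apply directly. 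Without the conformal step and this null-adapted Riccati comparison, your Step 2 and your perturbation argument do not go through.
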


As an immediate corollary of the above result, we can solve the Lorentzian Calder\'{o}n problem near the Minkowski geometry. Recall that the Minkowski metric $\eta$ on $\R^{1+n}$ is defined via $\eta= -(dt)^2+(dx^1)^2+\ldots+(dx^n)^2.$
\begin{corollary}
	\label{thm_minkowski}
	Let $\Omega\subset \R^n$ be a compact connected set with non-empty interior and a smooth strictly convex boundary. For $j=1,2,$ let $V_j \in C^{\infty}_c(\R\times \Omega)$. Let $T>0$ be sufficiently large, let $\eta$ be the Minkowski metric and suppose that $g$ is a smooth metric on $M=[-T,T]\times \Omega$ that is in a sufficiently small $C^2(M)$-neighborhood of $\eta$. Let $\Lambda_{V_j}$ be defined as in \eqref{DNmap}, corresponding to the wave equation \eqref{pf0} on $(M,g)$ with $V=V_j$. If
	$$\Lambda_{V_1}\,f= \Lambda_{V_2}\,f,\quad \forall\, f \in H^1_0(\Sigma),$$
	then $V_1=V_2$ on $M$.
\end{corollary}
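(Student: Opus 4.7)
The plan is to derive Corollary~\ref{thm_minkowski} directly from Theorem~\ref{thm_main}, taking the base metric in the theorem to be the Minkowski metric $\eta$ and the perturbation $\tilde g$ to be the metric $g$ appearing in the corollary. The whole task then reduces to checking that $(M,\eta)$ with $M=[-T,T]\times \Omega$ satisfies the geometric hypotheses (H1)--(H4); note that $\eta$ is already of the form \eqref{cylinder}--\eqref{splitting} with $c\equiv 1$ and $g_0$ the Euclidean metric on $\Omega$, and any sufficiently small $C^2$-neighborhood of $\eta$ serves as the neighborhood demanded by the theorem.

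Three of the four hypotheses should be essentially tautological in Minkowski. Since the Riemann tensor of $\eta$ vanishes identically, (H1) holds with equality. For (H2), the exponential map at any point of $\R^{1+n}$ is a global linear diffeomorphism carrying the spacelike cone onto $\mathscr E_p$, while uniqueness of the causal joining along a null geodesic follows from the equality case of the reverse triangle inequality in flat spacetime. For (H4), null geodesics of $\eta$ project to affine lines in $\R^n$, and the strict convexity of $\partial \Omega$ implies such lines meet $\partial \Omega$ to at most first order, whence null geodesics touch $\Sigma$ to at most first order.

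The main obstacle, though still elementary, is (H3), and it is here that the hypothesis on the size of $T$ is used. I would set $K=\supp V_1\cup \supp V_2\subset [-R,R]\times \Omega$ and take $T>R+3\,\mathrm{diam}(\Omega)$. For any $p=(t_p,x_p)\in K$, the inequality $T-|t_p|>\mathrm{diam}(\Omega)$ places the top and bottom caps $\{\pm T\}\times \Omega$ entirely inside $I^{\pm}(p)$, yielding $\mathscr E_p\cap \partial M\subset \Sigma$. To produce the distinguished point, I would push it to the extreme past and set $p_0=(-T+\mathrm{diam}(\Omega),x_0)$ for some interior point $x_0$ of $\Omega$; the same caps-in-light-cone argument handles $\mathscr E_{p_0}\cap \partial M\subset \Sigma$, and a direct time-coordinate comparison confines $\mathscr E_{p_0}$ to the slab $\{t<-T+2\,\mathrm{diam}(\Omega)\}$ while each $\mathscr E_q$ with $q\in K$ lies in $\{t>-R-\mathrm{diam}(\Omega)\}$, so the two exteriors are disjoint by the choice of $T$, and $p_0\in J^-(K)\cap M^{\mathrm{int}}$ is automatic. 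With (H1)--(H4) in hand for $(M,\eta)$, Theorem~\ref{thm_main} applied with $\tilde g=g$ yields $V_1=V_2$ on $M$.
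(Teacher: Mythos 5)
Your proof is correct and follows the approach the paper implies: since the paper states Corollary~\ref{thm_minkowski} only as an ``immediate corollary'' of Theorem~\ref{thm_main} without further argument, the intended route is exactly the one you carry out, namely verifying (H1)--(H4) for $(M,\eta)$ (with (H1) trivial since $\eta$ is flat, (H2) from convexity of $\Omega$ and the reverse triangle inequality, (H3) from a $T$-large slab argument, and (H4) from strict convexity of $\partial\Omega$) and then invoking the theorem with base metric $\eta$ and perturbation $g$. Your quantitative choice $T>R+3\,\mathrm{diam}(\Omega)$ and placement of $p_0$ near the past boundary correctly discharge (H3).
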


The key tool in proving Theorem~\ref{thm_main} is a new optimal unique continuation property for the wave equation in exterior regions of double null cones. Precisely, we prove the following unique continuation theorem.

\begin{theorem}\label{thm_uc}
Let $(M,g)$ be a Lorentzian manifold of the form \eqref{cylinder}--\eqref{splitting} and assume that (H1)--(H2) are satisfied. Let $\tilde g$ be a smooth Lorentzian metric on $M$ that lies in a sufficiently small $C^2(M)$-neighborhood of $g$. Let $X$ be a first order linear differential operator with smooth coefficients on $M$. Let $p \in M^{\text{int}}$ be such that $\mathscr E_p\cap \p M\subset \Sigma$, where $\mathscr E_p$ is defined by \eqref{exterior_null} associated to the manifold $(M,\tilde g)$ and $\Sigma=(-T,T)\times \p M_0$. Let $u \in H^{-s}(M)$ for some $s \geq 0$ be a distributional solution to 
    \begin{align*}
\Box u+X u=0 \quad  \text{on $\mathscr E_p$},
    \end{align*}
    where $\Box$ is the wave operator associated to $(M,\tilde g)$. Suppose that the traces $u$ and $\p_\nu u$ both vanish on the set $\Sigma \cap  \mathscr E_p$. Then, $u = 0$ on $\mathscr E_p$.
\end{theorem}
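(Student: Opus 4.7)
My plan is to establish a geometric Carleman estimate for $\Box_{\tilde g}$ on $\mathscr{E}_p$ using a weight built from the Lorentzian squared distance from $p$. Since assumption (H2) is an open condition in $C^2$, after shrinking the allowed neighborhood of $g$ if necessary, the $\tilde g$-exponential map $\exp_p$ remains a diffeomorphism from the $\tilde g$-spacelike vectors in $T_p M$ onto $\mathscr{E}_p$. I then introduce the smooth function
\[ f(q) = \tilde g_p\bigl(\exp_p^{-1}(q),\, \exp_p^{-1}(q)\bigr), \qquad q \in \mathscr{E}_p, \]
which is positive on $\mathscr{E}_p$, extends continuously by zero to $\partial \mathscr{E}_p$, and has spacelike gradient on $\mathscr{E}_p$; its level sets $\{f = \tau^2\}$ are timelike ``Lorentzian hyperboloids'' that foliate $\mathscr{E}_p$ as $\tau$ ranges over $(0,\infty)$ (in the Minkowski case, literally the hyperboloids $|x|^2 - t^2 = \tau^2$).

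\textbf{Pseudoconvexity from (H1) and Carleman estimate.} The analytic heart of the proof is the verification that, under (H1), the foliation $\{f = \text{const}\}$ is H\"ormander-pseudoconvex for $\Box_{\tilde g}$: for every non-zero null covector $\xi$ at a point $q \in \mathscr{E}_p$ conormal to a leaf, one should have $\Hess(\Phi \circ f)(\xi^{\sharp}, \xi^{\sharp}) > 0$ after a suitable strictly convex reparametrization $\Phi$. Using the Gauss lemma for $\exp_p$ and the Jacobi equation along the radial spacelike $\tilde g$-geodesic from $p$ to $q$, the Hessian of $f$ at $q$ restricted to directions transverse to $\nabla f$ reduces to a Jacobi-field identity whose leading curvature contribution is precisely $\tilde g\bigl(R(N, v) v, N\bigr)$, where $v$ is the radial tangent and $N$ is proportional to $\xi^{\sharp}$; assumption (H1) is precisely the statement that this term has the correct sign. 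The convex reparametrization $\Phi$ then upgrades the resulting non-strict inequality to the strict pseudoconvexity needed, and since strict pseudoconvexity is an open condition, it persists under the perturbation $g \rightsquigarrow \tilde g$. The standard H\"ormander--Tataru machinery then yields a Carleman estimate of the form
\[ \sum_{|\alpha| \leq 1} \lambda^{3 - 2|\alpha|} \int_{\mathscr{E}_p} e^{2\lambda \Phi(f)} |D^{\alpha} v|^2 \, d\mu_{\tilde g} \;\leq\; C \int_{\mathscr{E}_p} e^{2\lambda \Phi(f)} |\Box_{\tilde g} v|^2 \, d\mu_{\tilde g} \]
for all $v \in C_c^{\infty}(\mathscr{E}_p)$ and $\lambda \geq \lambda_0$, which in particular absorbs the lower-order perturbation $X v$.

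\textbf{Propagation, distributional regularity, and main obstacle.} To apply the estimate to the merely distributional solution $u \in H^{-s}$, I would first mollify $u$ in the time variable furnished by \eqref{cylinder} to obtain smooth approximants, then cut off against the foliation by multiplying by a bump supported in $\{f \geq \tau\}$ and integrate by parts. The vanishing of both Dirichlet and Neumann traces of $u$ on $\Sigma \cap \mathscr{E}_p$ kills all boundary contributions there, while the commutator produced by the cutoff is localized to $\{f \approx \tau\}$, where the weight $e^{2\lambda \Phi(f)}$ is uniformly controlled. Sending $\tau \to 0$ and then $\lambda \to \infty$ in the familiar sliding-foliation argument forces $u = 0$ on each $\{f > \tau\}$, hence on all of $\mathscr{E}_p$. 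The principal obstacle I anticipate is the pseudoconvexity verification: extracting \emph{strict} pseudoconvexity from the one-sided, non-strict curvature bound (H1), which is genuinely weaker than the two-sided bound used in \cite{AFO}. This will require isolating exactly which curvature contractions enter the relevant pseudoconvexity inequality (precisely those appearing in (H1), not the full Riemann tensor) and designing the convexifying function $\Phi$ carefully enough that the resulting strict positivity remains robust under $C^2$-small perturbations of the metric.
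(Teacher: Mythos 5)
Your outline follows the paper's strategy closely in its broad strokes — you foliate $\mathscr E_p$ by the level sets of the Lorentzian squared distance $r_p^2$ from $p$, aim for strict pseudoconvexity of that foliation for the wave operator, and then invoke the standard H\"ormander machinery for unique continuation. However, there is a genuine gap at the heart of the pseudoconvexity step, and you have correctly identified where it lies without resolving it.

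The problem is your claim that a strictly convex reparametrization $\Phi$ of the weight upgrades the non-strict inequality coming from the non-strict bound (H1) to the needed strict pseudoconvexity. This fails at exactly the points where it matters. Writing $p(x,\xi)$ for the principal symbol, strong pseudoconvexity of a hypersurface $\{ \phi = c\}$ requires $H_p^2 \phi(\xi) > 0$ precisely at the covectors where $p(\xi) = 0$ \emph{and} $H_p \phi(\xi) = 0$. On that set one has
\begin{equation*}
H_p^2 (\Phi\circ \phi)(\xi) = \Phi''(\phi)\,(H_p\phi(\xi))^2 + \Phi'(\phi)\,H_p^2\phi(\xi) = \Phi'(\phi)\,H_p^2\phi(\xi),
\end{equation*}
so the convexification inherits exactly the same degeneracy; $\Phi$ can only help where $H_p\phi\neq 0$, which is irrelevant for the pseudoconvexity condition. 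The paper's fix is of a completely different nature: a \emph{conformal change of the metric}. Lemma~\ref{lem_conf_strict} shows that for $f = \delta\,e^{\lambda t}$ with $\lambda$ large and $\delta$ small, the conformal metric $e^{2f}g$ satisfies the strict bound $(\textrm{H1})^\prime$, because the curvature transformation under conformal scaling contributes $-\Hess f(N,N) + (df(N))^2$, which is strictly negative on null $N$ by \eqref{Hesstau} and because a null vector cannot have $N(t)=0$. Together with the conformal covariance of $\Box$ and the $C^2$-openness of $(\textrm{H1})^\prime$--(H2) (Lemma~\ref{lem_conf_pert_0}), this reduces Theorem~\ref{thm_uc} to the strict curvature bound. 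Without this step, your Carleman weight is genuinely degenerate and the estimate does not close.

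A second, smaller gap is the passage from the pointwise curvature bound to pseudoconvexity of $r_p^2$ at a given $q\in\mathscr E_p$. The Hessian of $r_p^2$ at $q$ depends on the curvature along the entire radial geodesic from $p$ to $q$, not just at $q$, and the sign is not an immediate consequence of a ``Jacobi-field identity.'' The paper propagates the sign via the radial curvature (matrix Riccati) equation \eqref{radial_curvature} together with a new comparison theorem (Proposition~\ref{prop} and Lemma~\ref{lem_null}) adapted to the indefinite setting: if the curvature matrix is null negative-definite for all parameters, then the shape operator $-\tilde S(t)$ stays null negative-definite, which is exactly \eqref{pseudo_convex_eq}. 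This comparison argument, which replaces the two-sided Andersson--Howard bounds used in the earlier work \cite{AFO} by the one-sided condition $(\textrm{H1})^\prime$, is the other main technical novelty and is missing from your sketch. Finally, note that the paper does not reprove the Carleman estimate: once strict pseudoconvexity is established it cites \cite[Theorem~28.3.4]{Ho4} (together with \cite[Lemma~5.2]{AFO} to handle the $H^{-s}$ regularity), rather than reconstructing the sliding-foliation argument from scratch.
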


Theorem~\ref{thm_main} follows immediately from combining the above unique continuation theorem with the controllability arguments in our earlier work, see \cite[Section 6--7]{AFO}.

\subsection{Previous literature}
\label{previous_lit}

Before reviewing the literature of the Lorentzian Calder\'{o}n problem, let us make a comparison with related results in the elliptic setting. Recall that the analogous injectivity question $V\mapsto \Lambda_V$ with $\Lambda_V$ denoting the Dirichlet-to-Neumann map associated to a Riemannian manifold $(M,g)$ and with $\Box$ replaced by the Laplace--Beltrami operator on $(M,g)$ is the well known Calder\'{o}n problem. In this elliptic setting, the seminal work \cite{SU} proves uniqueness for the coefficient $V$ on Euclidean domains of dimension larger than two. Uniqueness in the two dimensional case was proved in the work \cite{Nach} for certain classes of smooth $V$ and later in \cite{Bukh} for general smooth $V$. We mention also that uniqueness is known when the manifold $(M,g)$ and $V$ are both real-analytic \cite{LU} . Outside of these categories, uniqueness is only known for certain manifolds $(M,g)$ with a Euclidean direction under additional assumptions \cite{DKSU,DKLS}. Here, existence of a Euclidean direction essentially means that the components of the Riemannian metric must be independent of one of the coordinates on the manifold. For a review of the literature of results on the (Riemannian) Calder\'{o}n problem we refer the reader to the survey article \cite{Uh}.

The results that are related to recovery of lower order coefficients in wave equations can in general be divided into two categories of time-independent
and time-dependent coefficients. Starting with the seminal work \cite{Bel87}, there is a rich literature of results that is related to the recovery of time-independent coefficients based on the Boundary Control (BC) method. The BC method fundamentally relies on the optimal unique continuation theorem of Tataru \cite{Tataru} (see also 
the important precursor by Robbiano \cite{Rob}, and related later results by Robbiano and Zuilly \cite{RZ} and Tataru 
\cite{TataruII}). This result says, in the case of the wave equation with analytic in time coefficients, that unique continuation principle holds across any non-characteristic hypersurface. Nonetheless, the unique continuation principle fails when the coefficients in the equation are only smooth \cite{Al}. 
In line with this, the works by Eskin \cite{E1,E2} solve the inverse problem of recovering time-analytic coefficients for the wave equation. We refer the reader to \cite{Bel,KKL} for a review of the results that are based on the BC method.

Outside the category of wave equations with time-analytic coefficients, results are scarce. We mention the works \cite{RS,Sal,St} that solve the problem of recovering time-dependent lower order coefficients in the Minkowski spacetime. The approach of Stefanov in \cite{St} uses the principle of propagation of singularities for the wave equation to reduce the inverse problem to the study of the injectivity of the light ray transform of the unkown coefficient. The inversion of this transform follows from Fourier analysis in the Minkowski geometry. The reduction step from injectivity of the map $V \mapsto \Lambda_V$ to injectivity of the light ray transform has been generalized to a broad geometric setting \cite{SY} but there are few results about injectivity of the light ray transform. Indeed, this transform is known to be injective only in the case of ultrastatic metrics \cite{FIKO} (see also the related earlier work \cite{KO19}), stationary metrics \cite{FIO}, and in the case of real-analytic spacetimes \cite{Stefanov1}, under certain additional convexity conditions.

In the recent work by the authors \cite{AFO}, an optimal unique continuation theorem was proved for the wave equation in Lorentzian geometries that satisfy certain two-sided curvature bounds first introduced by Andersson and Howard in \cite{AH98}. As a consequence of this unique continuation principle, together with some controllability theory for the wave equation in rough Sobolev spaces, we were able to show unique determination of zeroth order coefficients in certain geometries where no real analytic features are present.
The result \cite{AFO} covers perturbations of ultrastatic manifolds with strictly negative spatial sectional curvature. 
However, the Minkowski spacetime is on the boundary of the spacetimes allowed in \cite{AFO}, and its perturbations are not covered by the theory. Our proof of the unique continuation theorem in \cite{AFO} relied on the notion of spacetime convex functions that can be constructed under the two-sided curvature bounds \cite{AB08,AH98}, together with Carleman estimates with degenerate weights. In relation to the latter idea, we mention the earlier work \cite{Alexakis,Shao} that prove a similar type of unique continuation result in the Minkowski geometry.

In the present work, we have identified a weaker curvature bound (H1) that replaces the curvature bound in the previous work. This improvement is essentially related to a new comparison result for a Riccati equation compared to the comparison result that appears in the work of Alexander and Bishop, see \cite[Theorem 4.3]{AB08}. As a result of assuming only this weaker curvature bound, we also solve the Lorentzian Calder\'{o}n problem for spacetime perturbations of the Minkowski geometry. Let us mention also that the proof of the unique continuation principle in this paper is different from the previous work. For each point $p$ on $M$, we construct a function with strictly pseudo-convex level sets that give a foliation of the exterior of the double null cone $\mathscr E_p$. The level sets of the spacetime convex functions used in \cite{AFO} are pseudo-convex only in a non-strict manner.  

While finishing this article, we became aware of an upcoming preprint by Vaibhav Kumar Jena and Arick Shao, where they independently prove a unique continuation theorem in the near Minkowski setting. Their assumptions and proof are different from ours, however, Corollary~\ref{thm_minkowski} can be also proven by using their unique continuation result together with our previous work  \cite{AFO}. We also mention that, contrary to our unique continuation principle, their result comes with a Carleman estimate. To acknowledge simultaneity and independence of both the works, we agreed to post our respective preprints to arXiv at the same time.

\medskip\paragraph{\bf Acknowledgements}
S.A. acknowledges support from NSERC grant 488916. A.F. was supported by the Fields institute for research in mathematical sciences.

\section{Comparison result for a Riccati equation on $\R^n$}

Let us consider the Minkowski inner product on $\R^{n}$ with $n\geq 2$, that is defined by 
$$ \langle v,w\rangle = -v_0w_0+ \sum_{j=1}^{n-1} v_jw_j.$$ 
A vector $v\in \R^{n}$ is {\em null} if $\langle v,v\rangle=0$, 
and a matrix $L \in \R^{n \times n}$ is {\em symmetric} (with respect to the Minkowski metric) if 
    \begin{align*}
\langle L v,w\rangle = \langle v, Lw\rangle
    \end{align*}
for all $v,w \in \R^n$.
We have the following definition.
\begin{definition}
\label{def_null_neg}
A symmetric matrix $L\in \R^{n\times n}$ is null negative-definite, shortly $L\ND 0$, if 
    \begin{align*}
\langle Lv,v\rangle< 0, \quad \text{for all nonzero null vectors $v \in \R^{n}$},
    \end{align*}
 We also say that $L$ is null negative semi-definite, shortly $L\NS 0$, if 
    \begin{align*}
\langle Lv,v\rangle \leq 0, \quad \text{for all null vectors $v \in \R^{n}$}.
    \end{align*}
    \end{definition}
    
 We start with a lemma.
 
 \begin{lemma}
 \label{lem_null}
 Let $L\in \R^{n\times n}$, be a symmetric matrix that is null negative semi-definite but that it is not null negative-definite. Then there exists a nonzero null vector $x\in \R^n$  such that $Lx= \lambda x$ for some $\lambda \in \R$.
 \end{lemma}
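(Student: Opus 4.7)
The plan is to exhibit the desired $x$ as a maximizer of the Minkowski quadratic form $Q(v) := \langle Lv,v\rangle$ over the null cone, and then read off the eigenvalue equation $Lx=\lambda x$ directly from the Lagrange multiplier condition.

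First I would unpack the hypotheses. Since $L \NS 0$ but not $L \ND 0$, there must exist a nonzero null vector $x_0$ with $Q(x_0)=0$ (otherwise every nonzero null vector would satisfy $Q(v)<0$, which is precisely $L \ND 0$). Combined with $Q\leq 0$ on the whole null cone $\mathcal N := \{v\in \R^n : \langle v,v\rangle=0\}$, this makes $x_0$ a maximizer of $Q$ on $\mathcal N$. Existence of such a maximizer is not an issue: by the homogeneities $Q(tv)=t^2 Q(v)$ and $Q(-v)=Q(v)$, the supremum of $Q$ on $\mathcal N$ equals the maximum of the continuous function $\omega \mapsto Q(1,\omega)$ over the compact sphere $\omega\in S^{n-2}$, which is attained.

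The key computational observation is that the Minkowski-symmetry of $L$ is equivalent to $L^T\eta = \eta L$, where $\eta = \mathrm{diag}(-1,1,\ldots,1)$; in other words, the product $\eta L$ is symmetric in the usual Euclidean sense. Writing $Q(v) = v^T\eta L\, v$ and $f(v):=\langle v,v\rangle = v^T\eta v$, the ordinary Euclidean gradients are therefore $\nabla Q(v) = 2\eta L v$ and $\nabla f(v) = 2\eta v$. This is the step that converts the Minkowski-geometric problem into a straightforward Euclidean calculus exercise.

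Finally, at the nonzero maximizer $x_0$, the level set $\mathcal N \setminus \{0\}$ is a smooth hypersurface with $\nabla f(x_0) = 2\eta x_0 \neq 0$, so the Lagrange multiplier theorem yields $\eta L x_0 = \lambda\, \eta x_0$ for some $\lambda\in\R$. Since $\eta$ is invertible, this reduces to $L x_0 = \lambda x_0$, as desired. I do not anticipate any serious obstacle: the compactness reduction is handled by homogeneity, and the only other potential subtlety---that the null cone has a singular point at the origin---is avoided since the Lagrange condition is applied at the nonzero point $x_0$.
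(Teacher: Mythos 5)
Your proof is correct, and it takes a different route from the paper's. You package the argument as a constrained optimization: the hypotheses hand you a nonzero null $x_0$ with $\langle Lx_0,x_0\rangle=0$, which is therefore a global maximizer of $Q(v)=\langle Lv,v\rangle$ on the null cone, and since Minkowski-symmetry of $L$ means $\eta L$ is Euclidean-symmetric, the Lagrange multiplier condition at $x_0$ (a regular point of the constraint $v^T\eta v=0$) gives $\eta Lx_0=\lambda\eta x_0$, hence $Lx_0=\lambda x_0$. The paper instead carries out the first variation by hand: it perturbs along explicit null curves $z_\epsilon=e_0+\sqrt{1-\epsilon^2}\,\tilde x+\epsilon y$ with $y$ orthogonal to $e_0$ and $\tilde x$, deduces $\langle Lx,y\rangle=0$ for all such $y$, concludes $Lx\in\mathrm{span}\{e_0,\tilde x\}$, and then uses $\langle Lx,x\rangle=0$ to identify the two coefficients; the case $n=2$ (where no such $y$ exists) is treated separately. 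The underlying mechanism is the same first-order criticality at a point where the semi-definite form touches zero, but your formulation absorbs the extra relation $\langle Lx_0,x_0\rangle=0$ (which accounts for the tangent direction along the cone's ruling) into the multiplier condition, so it handles all $n\geq 2$ uniformly and with less computation; the paper's version is self-contained, making the relevant perturbations explicit rather than invoking the Lagrange multiplier theorem. Two small remarks: your compactness paragraph about maximizing over $S^{n-2}$ is superfluous, since the hypothesis already exhibits the maximizer $x_0$; and the smoothness of the constraint set away from the origin plus $\nabla f(x_0)=2\eta x_0\neq 0$ is exactly the regularity needed, as you note, so there is no gap there.
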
   
 \begin{proof}
 We begin by defining $e_0=(1,0,\ldots,0)$. Since $L$ is null negative semi-definite but not null negative-definite, there must exist a null vector $x=e_0 +\tilde{x}$, with $\langle e_0,\tilde{x}\rangle=0$ and $\langle{\tilde x,\tilde x}\rangle=1$ such that
 $$ \langle{Lx,x}\rangle=0.$$ 
When $n=2$, there are $c_0, c_1 \in \R$ such that 
    \begin{align}\label{Lx_lin_comb}
Lx = c_0 e_0 + c_1 \tilde x.
    \end{align}
The claim follows since $c_0 = c_1$ due to
    \begin{align*}
0 = \langle{Lx,x}\rangle= -c_0 + c_1.
    \end{align*}

When $n\geq3$, we proceed by letting $y$ to be any vector that satisfies
 \begin{equation}
 \label{y_cond} 
 \langle{y,y}\rangle=1,\quad\text{and}\quad \langle{y,e_0}\rangle=\langle{y,\tilde{x}}\rangle=0,
 \end{equation}
 and subsequently define for any $\epsilon \in (-1,1)$,
 $$z_\epsilon= e_0+\sqrt{1-\epsilon^2} \,\tilde x+ \epsilon y.$$
Observe that $z_\epsilon$ is null. Moreover, as $\langle{Lx,x}\rangle=0$, there holds 
$$ 0\geq\langle{Lz_\epsilon,z_\epsilon}\rangle = 2\epsilon \langle{Lx,y}\rangle + O(\epsilon^2).$$
Since the latter expression must be valid for all $y$ that satisfy \eqref{y_cond}, we deduce that (\ref{Lx_lin_comb}) holds for some $c_0, c_1 \in \R$, and conclude as in the case $n=2$.
 \end{proof}

 \begin{proposition}
 \label{prop}
Let $T>0$. Suppose that $B\in C([0,T];\R^{n \times n})$ satisfies
$$B(t) \ND 0\quad \text{for all $t\in[0,T]$}.$$ 
Let $L$ be the unique symmetric matrix that solves the Riccati equation 
\begin{equation}
\label{riccati} 
tL'(t) +L^2(t) -L(t) +t^2 B(t) =0 \quad \text{for all $t\in (0,T]$},
\end{equation}
with $L(0)=\textrm{id}$, where $\textrm{id}$ stands for the identity matrix. Then, 
$$-L(t) \ND 0\quad \text{for all $t \in (0,T]$}.$$
 \end{proposition}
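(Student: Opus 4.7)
The plan is to run a first-time-of-failure argument on the scalar function $\phi(t) = \langle L(t) x, x \rangle$ for a carefully chosen null vector $x$. The scheme parallels the classical Raychaudhuri-type comparison for Riccati ODEs, adapted to the Lorentzian signature via the $\ND$ notion and Lemma~\ref{lem_null}.

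First, I would establish the conclusion on an initial interval $(0, \delta)$ by a Taylor expansion. Writing $L(t) = \mathrm{id} + t A_1 + t^2 A_2 + O(t^3)$ and substituting into \eqref{riccati} forces $A_1 = 0$ and $A_2 = -B(0)/3$. Hence, for any nonzero null $v \in \R^n$,
\[
\langle -L(t) v, v\rangle \;=\; -\langle v, v\rangle + \tfrac{t^2}{3}\langle B(0) v, v\rangle + O(t^3) \;=\; \tfrac{t^2}{3}\langle B(0) v, v\rangle + O(t^3),
\]
which is strictly negative uniformly for $v$ on the (compact) unit null sphere, since $B(0) \ND 0$. Hence $-L(t) \ND 0$ on some $(0, \delta)$.

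Next, I would set $t_0 := \sup\{\tau \in (0, T] : -L(t) \ND 0 \text{ for all } t \in (0, \tau]\}$ and assume toward contradiction that $t_0 < T$. By continuity of $L$ and compactness of the unit null sphere, one has $-L(t_0) \NS 0$; by maximality of $t_0$ this matrix is not null negative-definite. Lemma~\ref{lem_null} then supplies a nonzero null vector $x$ and a scalar $\lambda \in \R$ with $L(t_0) x = \lambda x$. Consider $\phi(t) = \langle L(t) x, x \rangle$. On $(0, t_0)$ we have $\phi(t) > 0$ because $-L(t) \ND 0$, while $\phi(t_0) = \lambda \langle x, x \rangle = 0$ since $x$ is null; hence $\phi$ has a left-endpoint minimum at $t_0$, forcing $\phi'(t_0) \leq 0$. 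Differentiating $\phi$ and using \eqref{riccati} together with the symmetry of $L$,
\[
t_0\, \phi'(t_0) \;=\; \langle L(t_0) x, x \rangle \;-\; \langle L(t_0) x, L(t_0) x \rangle \;-\; t_0^2 \langle B(t_0) x, x \rangle.
\]
Both inner products containing $L(t_0) x = \lambda x$ vanish because $x$ is null. What remains is $t_0\, \phi'(t_0) = -t_0^2 \langle B(t_0) x, x \rangle > 0$ by $B(t_0) \ND 0$, contradicting $\phi'(t_0) \leq 0$. Thus $t_0 = T$, and the proposition follows.

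The key obstacle—and the reason Lemma~\ref{lem_null} is stated and proved beforehand—is producing a genuine null \emph{eigenvector} $x$ at the critical time $t_0$, not just a null vector on which $\langle -L(t_0) x, x\rangle$ vanishes: the cancellation $\langle L x, L x \rangle = \lambda^2 \langle x, x\rangle = 0$ in the Riccati computation relies on $x$ being an eigenvector, and without it the sign of $t_0 \phi'(t_0)$ cannot be read off the $B$-term. Smoothness and global existence of $L$ up to $T$ is a mild separate issue; one can either construct $L$ by a convergent power series at $t=0$ combined with standard ODE theory for $t > 0$, or reparametrize by $\tau = \log t$ to obtain a regular ODE, and then the monotonicity established in the above argument prevents blow-up on the maximal interval of existence.
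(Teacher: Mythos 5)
Your proposal is correct and follows essentially the same route as the paper's proof: the Taylor expansion at $t=0$ reproduces the paper's computation $L'(0)=0$, $L''(0)=-\tfrac{2}{3}B(0)$ giving $-L\ND 0$ near $0$, and the first-failure-time argument with the null eigenvector from Lemma~\ref{lem_null} and the sign of $t_0\phi'(t_0)$ extracted from \eqref{riccati} is exactly the paper's contradiction, with your $\phi$ equal to $-f$ there. The only (shared, harmless) looseness is at the endpoint $t_0=T$ and in the regularity needed to expand $L$ to second order when $B$ is merely continuous, neither of which affects the argument.
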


\begin{proof}
By differentiating \eqref{riccati} we obtain that
$$ L'+tL'' +L'L+LL' -L' +(t^2B)'=0.$$
Plugging $t=0$ in the latter expression, we deduce that
\begin{equation}
\label{S_0}
L'(0)=0.
\end{equation}
Differentiating the equation again and plugging $t=0$, we also obtain
\begin{equation}
\label{S_1}
L''(0)=-\frac{2}{3}B(0).
\end{equation}
Together with the fact that $B(0) \ND 0$, it follows that 
\begin{equation}
\label{local_est}
 -L(t) \ND 0, \quad \text{for all $t \in (0,\delta)$,}\end{equation}
for some small $\delta>0$. To show that $-L \ND 0$ on the entire interval $(0,T]$, we assume that this is not the case and reach a contradiction. But then, by \eqref{local_est}, there is $t_0\in (0,T)$ such that
$-L(t)$ is null negative-definite for all $t \in (0,t_0)$, and that $-L(t_0)$ is null negative semi-definite but not null negative-definite. Thus, in view of Lemma~\ref{lem_null}, we have
\begin{equation}
\label{null_prop}
L(t_0)x_0 = \lambda x_0,
\end{equation}
for some nonzero null vector $x_0$ and some $\lambda\in \R$. Next, let us define
$$ f(t) = -\langle{L(t)x_0,x_0}\rangle.$$
Then applying \eqref{riccati} we may write
\begin{equation}
\label{derivative_iden}
\begin{aligned}
t_0f'(t_0)&= \langle{-t_0L'(t_0)x_0,x_0}\rangle\\
&=\langle{L(t_0)x_0,L(t_0)x_0}\rangle-\langle{L(t_0)x_0,x_0}\rangle+t_0^2 \langle{B(t_0)x_0,x_0}\rangle.
\end{aligned}
\end{equation}
Note that since $x_0$ is null and \eqref{null_prop} holds, we have
$$\langle{L(t_0)x_0,L(t_0) x_0}\rangle=0\quad \text{and}\quad \langle{L(t_0)x_0,x_0}\rangle=0.$$
Thus, we may simplify \eqref{derivative_iden} to obtain
$$
t_0f'(t_0)=t_0^2 \langle{B(t_0)x_0,x_0}\rangle<0.
$$
where we have also used the fact that $B\ND 0$. 
On the other hand, $f < 0$ on $(0,t_0)$, since $-L \ND 0$ on this interval, and also $f(t_0) = 0$.
Hence $t_0f'(t_0)\geq 0$, a contradiction. 
\end{proof}

\section{Proof of Theorem~\ref{thm_uc} via a strictly pseudo-convex foliation}
\label{uc_sec}
The aim of this section is to prove the unique continuation property claimed in Theorem~\ref{thm_uc}. We define a stronger variant of (H1) with a strict inequality, namely,
\begin{itemize}
	\item[$(\textrm{H1})^\prime$]{For any point $p\in M$, any spacelike vector $v\in T_pM$, and any nonzero null vector $N\in T_pM$ with $g(v,N)=0$, there holds $$g(R(N,v)v,N)<0.$$}
\end{itemize}

\begin{lemma}
	\label{lem_conf_strict}
	Let $(M,g)$ be a Lorentzian manifold of the form \eqref{cylinder}--\eqref{splitting} that satisfies (H1)--(H2). There exists $f\in C^{\infty}(M)$ such that the Lorentzian manifold $(M,e^{2f}g)$ satisfies $(\textrm{H1})^\prime$--(H2) with $(M,g)$ replaced by $(M,e^{2f}g)$.  
\end{lemma}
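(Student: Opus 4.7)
The plan is to take $f = \epsilon h$ for a small $\epsilon > 0$ and a specific $h \in C^\infty(M)$ built from the global time coordinate of the splitting \eqref{splitting}.

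First, I would observe that the causal relations $\le$ and $\ll$, and hence the sets $\mathscr E_p$, as well as the class of null geodesics viewed as unparameterized curves, are conformally invariant. Thus the first sentence of (H2) is automatically preserved under $g \mapsto e^{2f} g$. The second sentence, that $\exp_p$ is a diffeomorphism from the spacelike vectors onto $\mathscr E_p$, is an open condition on the metric in the $C^2$ topology on the compact manifold $M$, and it is preserved provided that $\epsilon$ is sufficiently small.

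Second, I would compute the conformal transformation of the null sectional curvature. The standard formula for the Riemann tensor under $\tilde g = e^{2f} g$, expressed via the Kulkarni--Nomizu product of $g$ with $\sigma_{ab} := \nabla_a f\,\nabla_b f - \nabla_a \nabla_b f - \tfrac{1}{2}|\nabla f|^2 g_{ab}$, gives, after contraction with a null $N$ and a spacelike $v$ satisfying $g(N, v) = 0$,
\begin{equation*}
\tilde g\bigl(\tilde R(N, v) v, N\bigr) = e^{2f}\Bigl(g\bigl(R(N, v) v, N\bigr) + g(v, v)\bigl(\nabla^2 f(N, N) - (Nf)^2\bigr)\Bigr).
\end{equation*}
Assumption (H1) yields $g(R(N, v) v, N) \le 0$, and $g(v, v) > 0$ since $v$ is spacelike. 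Thus $(\textrm{H1})^\prime$ will hold as soon as
\begin{equation*}
\nabla^2 f(N, N) - (Nf)^2 < 0 \quad \text{for every nonzero null vector } N \in TM.
\end{equation*}

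Third, I would explicitly construct $h$ by setting $h(t, x) = -e^{\lambda t}$ in the splitting coordinates, for $\lambda > 0$ to be chosen. A direct computation yields
\begin{equation*}
\nabla^2 h(N, N) = e^{\lambda t}\bigl(-\lambda^2 (N^t)^2 + \lambda\,\Gamma^t_{ab} N^a N^b\bigr),
\end{equation*}
where $\Gamma^t_{ab}$ denotes the Christoffel symbols of $g$ in the splitting coordinates. The form $g = c(-dt^2 + g_0)$ ensures that $N^t \neq 0$ for every nonzero null vector, so we may normalize $(N^t)^2 = 1$. Compactness of $M$ and of the corresponding set of normalized null vectors then provides a uniform bound $|\Gamma^t_{ab} N^a N^b| \le C$, and the choice $\lambda > C$ gives $\nabla^2 h(N, N) < 0$ uniformly. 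For $f = \epsilon h$ we then have $\nabla^2 f(N, N) - (Nf)^2 = \epsilon\,\nabla^2 h(N, N) - \epsilon^2 (Nh)^2$, which is strictly negative for all $\epsilon > 0$ sufficiently small. Together with the conformal invariance observations above, this proves the lemma.

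The main difficulty lies in producing the function $h$: one needs a smooth function on $M$ whose $g$-Hessian is strictly null-negative everywhere. The exponential form $h = -e^{\lambda t}$ is tailored to exploit both the splitting structure (which forces every nonzero null vector to have nonzero $t$-component) and the fact that the $\lambda^2$-term coming from the second $t$-derivative dominates the linear-in-$\lambda$ Christoffel contribution for $\lambda$ large.
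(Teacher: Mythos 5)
Your overall strategy --- a conformal factor $e^{2f}$ with $f$ a small multiple of an exponential of the global time function, conformal invariance/openness for (H2), compactness to fix $\lambda$, and then smallness of the prefactor to absorb the $(Nf)^2$ term --- is exactly the one used in the paper. However, your key displayed formula has the wrong sign, and the error propagates into your construction of $h$. With the paper's curvature convention $R(X,Y)Z=\nabla_X\nabla_YZ-\nabla_Y\nabla_XZ-\nabla_{[X,Y]}Z$ (the convention in which (H1) and $(\textrm{H1})^\prime$ are stated), the conformal transformation law gives, for $g(N,N)=g(N,v)=0$,
\begin{equation*}
\tilde g\bigl(\widetilde R(N,v)v,N\bigr)=e^{2f}\Bigl(g\bigl(R(N,v)v,N\bigr)-g(v,v)\bigl(\Hess f(N,N)-(Nf)^2\bigr)\Bigr),
\end{equation*}
i.e.\ the correction enters with the sign opposite to the one you wrote; indeed, contracting $g\KN\sigma$ with your $\sigma=df\otimes df-\Hess f-\tfrac12|df|^2g$ produces $g(v,v)\,\sigma(N,N)=g(v,v)\bigl((Nf)^2-\Hess f(N,N)\bigr)$. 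Consequently the correct sufficient condition is $\Hess f(N,N)-(Nf)^2>0$ for all nonzero null $N$: you need a function whose Hessian is strictly \emph{positive} on null directions, not negative.

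This is not merely a bookkeeping issue: with your choice $f=-\epsilon e^{\lambda t}$ the lemma fails already for the Minkowski metric. Take $g=\eta$ on $\R^{1+2}$, $N=\p_t+\p_{x^1}$, $v=\p_{x^2}$; a direct computation with the conformally flat Christoffel symbols gives $\tilde g(\widetilde R(N,v)v,N)=e^{2f}\bigl((Nf)^2-\Hess f(N,N)\bigr)=e^{2f}\bigl(\epsilon^2\lambda^2e^{2\lambda t}+\epsilon\lambda^2e^{\lambda t}\bigr)>0$, so $(\textrm{H1})^\prime$ is violated rather than achieved, no matter how small $\epsilon>0$ is. The remedy is simply to flip the sign of your ansatz: take $h=+e^{\lambda t}$, for which $\Hess h(N,N)=e^{\lambda t}\bigl(\lambda\Hess t(N,N)+\lambda^2 N(t)^2\bigr)\geq C_1\,N(h)^2>0$ once $\lambda$ dominates the uniform bound $|\Hess t(N,N)|\leq C_0 N(t)^2$ (available since every nonzero null $N$ has $N(t)\neq 0$ by the splitting and compactness), and then $f=\delta h$ with $\delta\in(0,C_1)$ small enough to preserve (H2) yields $\Hess f(N,N)-(Nf)^2>0$. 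With this correction your argument coincides with the paper's proof.
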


\begin{proof}
	Let $|\cdot|$ be the length with respect to an auxiliary Riemannian metric on $M$. There is a constant $C>0$ such that given any $p\in M$ and any null vector $N\in T_pM$ there holds
	\begin{equation}
	\label{N_est}
	|N|\leq C |N(t)|.
	\end{equation} 
	This estimate can first be proved locally near a point $p$ in $M$ by using the normal coordinates centered at $p$ and subsequently by compactness of $M$ we can derive the estimate globally. It follows from \eqref{N_est} that there exists some constant $C_0>0$ only depending on $(M,g)$ such that given any $p\in M$ and any null vector $N\in T_pM$, there holds
	$$|\Hess t(N,N)| \leq C_0 N(t)^2.$$
	Fixing any $\lambda>2C_0$ and defining $\tau(t,x)=e^{\lambda t}$ for all $(t,x)\in M$, it follows that
	\begin{equation}
	\label{Hesstau}
	\Hess \tau(N,N)= e^{\lambda t}(\lambda \Hess t(N,N)+\lambda^2 N(t)^2)\geq C_1 N(\tau)^2,
	\end{equation}
	for some positive constant $C_1$ depending on $C_0$ and $\lambda$. Next, let us define $\tilde{g}=e^{2f}g$ with $f=\delta\,\tau$. We choose $\delta \in (0,C_1)$ to be sufficiently small so that $(M,\tilde g)$ satisfies (H2) with $(M,g)$ replaced by $(M,\tilde g)$. This is always possible as (H2) is an open condition with respect to small perturbations of the metric, see \cite[Section 3]{AFO} for the proof. We claim that $(M,\tilde g)$ also satisfies $(\textrm{H1})^\prime$ with $(M,g)$ replaced by $(M,\tilde g)$. To see this, we write $\widetilde R$ for the curvature tensor on $(M,\tilde g)$ and observe that
	\begin{equation}
	\label{R_conf_0} 
	\widetilde R= e^{2f}R - e^{2f}\, g\KN \left(\Hess f -df\otimes df + \frac{1}{2}|df|^2g\right),
	\end{equation}
	where $\KN$ is the Kulkarni--Nomizu product. We need to prove that there holds
	$$ \widetilde{R}(N,v,N,v)=\tilde g(\widetilde R(N,v)v,N) <0,$$
	for all $p \in M$, all $v\in T_pM$ and all nonzero $N\in T_pM$ that satisfy 
	$$ \tilde g(v,v)=1, \quad \tilde g(N,N)=0, \quad \text{and}\quad \tilde g(N,v)=0.$$
	Using equation \eqref{R_conf_0} together with the bound \eqref{Hesstau}, we have
	\begin{multline} 
	\label{eq_curv_0}
	e^{-2f}\widetilde R(N,v,N,v) = R(N,v,N,v) - \Hess f(N,N) +(df(N))^2=\\
	R(N,v,N,v)-\delta\Hess \tau(N,N)+ \delta^2(N(\tau))^2<0,
	\end{multline}
	as $\delta\in (0,C_1)$ and $R(N,v,N,v)\leq 0$, thus completing the claim.	
\end{proof}

\begin{lemma}
	\label{lem_conf_pert_0}
	Let $(M,g)$ be a Lorentzian manifold of the form \eqref{cylinder}--\eqref{splitting} that satisfies $(\textrm{H1})^\prime$--(H2). Given any smooth Lorentzian metric $\tilde g$ in a sufficiently small $C^2(M)$-neighborhood of $g$, the manifold $(M,\tilde g)$ also satisfies $(\textrm{H1})^\prime$--(H2) with $(M,g)$ replaced by $(M,\tilde g)$.
\end{lemma}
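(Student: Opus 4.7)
The plan is to show separately that $(\textrm{H1})^\prime$ and (H2) are each stable under $C^2$-small perturbations of the metric. Openness of (H2) is already established in \cite[Section 3]{AFO} (and was invoked in the proof of Lemma~\ref{lem_conf_strict}), so the remaining task is to verify that $(\textrm{H1})^\prime$ is an open condition in the $C^2$-topology on metrics.

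The strategy is a compactness-plus-continuity argument on a carefully chosen parameter space. Fix an auxiliary Riemannian metric $h$ on the compact manifold $M$. The quantity $g(R(N,v)v,N)$ is bi-homogeneous of degree two in $v$ and in $N$; moreover, since $R(N,N)=0$ and $g(R(N,v)N,N)=0$ by the antisymmetries of the curvature tensor, it depends only on the class $[v]\in N^{\perp_g}/\langle N\rangle$. I parameterize each class by the unique representative $v'$ with $h(v',N)=0$, normalize $|N|_h=1$ and $g(v',v')=1$, and consider
\begin{equation*}
K_g = \{(p,N,v') \,:\, p\in M,\ |N|_h = 1,\ g(N,N)=0,\ h(v',N)=0,\ g(v',N)=0,\ g(v',v')=1\}.
\end{equation*}
On each fiber over $p\in M$, $v'$ ranges over a $g$-unit sphere in the subspace $W_g(p,N):=\{w\in T_pM:h(w,N)=g(w,N)=0\}$, on which $g$ is positive definite (a timelike $w$ would force $N$ to be spacelike, while a null $w$ $g$-orthogonal to the null $N$ must be proportional to $N$ and is then killed by $h(w,N)=0$). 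Consequently $K_g$ is compact, and hypothesis $(\textrm{H1})^\prime$ for $g$ together with compactness yields an $\epsilon>0$ such that $\Phi_g(p,N,v'):=g(R_g(N,v')v',N)\le -\epsilon$ on $K_g$.

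To conclude, I argue by contradiction: suppose there exist metrics $\tilde g_k\to g$ in $C^2$ and points $(p_k,N_k,v_k')\in K_{\tilde g_k}$ with $\Phi_{\tilde g_k}(p_k,N_k,v_k')\ge 0$. Compactness of $M$ and the normalization $|N_k|_h=1$ give, along a subsequence, $p_k\to p$ and $N_k\to N$ with $|N|_h=1$ and $g(N,N)=0$. The main obstacle is ensuring compactness of the $v_k'$-sequence, since the $\tilde g_k$-unit slices are a priori not uniformly bounded in $h$-norm as the metric varies. I handle this by a blow-up argument: if $|v_k'|_h\to\infty$, the normalized limit $w=\lim v_k'/|v_k'|_h$ would satisfy $|w|_h=1$, $g(w,w)=0$, $g(w,N)=0$, and $h(w,N)=0$; in Lorentzian signature two $g$-null vectors that are $g$-orthogonal must be parallel, hence $w=\mu N$, and then $h(w,N)=\mu=0$ contradicts $|w|_h=1$. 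The sequence $v_k'$ is thus $h$-bounded and, along a subsequence, $v_k'\to v'\in K_g$. Joint continuity of $(\tilde g,p,N,v')\mapsto \Phi_{\tilde g}(p,N,v')$ in the $C^2$-topology on metrics then gives $\Phi_g(p,N,v')\ge 0$, contradicting $\Phi_g\le-\epsilon$ on $K_g$, and completing the proof.
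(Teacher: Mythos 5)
Your proof is correct, but it follows a genuinely different route from the one in the paper. Both arguments dispose of (H2) by citing \cite[Section 3]{AFO}; the difference is in how openness of $(\textrm{H1})^\prime$ is obtained. The paper argues quantitatively: writing $\epsilon$ for the $C^2$-distance from $\tilde g$ to $g$, it starts from a $\tilde g$-null/orthogonal pair $(\tilde N,\tilde v)$ normalized in the auxiliary metric, corrects it by $\mathcal O(\epsilon)$ multiples of $\p_t$ to produce a pair $(N,v)$ that is null/orthogonal for $g$, and then combines a uniform bound $R(N,v,N,v)\le \kappa\,|N|^2|v|^2$, $\kappa<0$, for the background metric with the fact that curvatures of $C^2$-close metrics are $C^0$-close. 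You instead compactify the parameter space and run a soft contradiction argument: you use the invariance of $g(R(N,v)v,N)$ under $v\mapsto v+cN$ to restrict to the slice $h(v',N)=0$, normalize $|N|_h=1$ and $g(v',v')=1$, and your blow-up argument shows this slice is compact and varies continuously with the metric. This is exactly what is needed to control the one genuine degeneracy of the problem, namely $v$ tending to the null direction $N$, where $g(R(N,v)v,N)\to 0$ while $|v|_h$ stays of order one; the paper's uniform bound phrased with the auxiliary norm of $v$ (and its claim that $g(v,v)$ is bounded below on $h$-unit spacelike vectors) degenerates precisely in this regime and should be read with $v$ normalized modulo $N$ (e.g.\ with $g(v,v)$ in place of $|v|^2$, or with your slice condition), so your explicit treatment of this point is a strength of your write-up. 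What the paper's route buys in exchange is explicit $\mathcal O(\epsilon)$ bookkeeping, hence in principle an effective size for the admissible $C^2$-neighborhood, whereas your argument is purely qualitative---which is all the lemma requires. One small point: the sentence ``consequently $K_g$ is compact'' needs the same blow-up argument you later apply to the sequence $v_k'$ (fiberwise positive-definiteness gives closedness and compact fibers, not uniform $h$-boundedness over $(p,N)$); since the identical limiting argument with the fixed metric $g$ supplies this, it is a one-line addition.
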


\begin{proof}
The fact that (H2) remains valid on manifolds $(M,\tilde g)$, with $\tilde g$ a small perturbation of $g$, follows from \cite[Section 3]{AFO}. We write $\epsilon>0$ for the distance in $C^2(M)$ of the metric $\tilde g$ from $g$ and write $\widetilde R$ for the curvature tensor on $(M,\tilde g)$.

Let $|\cdot|$ be the length with respect to an auxiliary Riemannian metric on $M$. Observe that since $(M,g)$ satisfies $(\textrm{H1})^\prime$, it follows that there exists a constant $\kappa<0$ such that given any $p\in M$, any  vector $N\in T_pM$ with $g(N,N)=0$ and any spacelike $v\in T_pM$ with $g(v,N)=0$, there holds
    \begin{align}\label{Rkappa}
R(N,v,N,v) \leq \kappa\, |N|^2 |v|^2.
    \end{align} 
Also, there is $\delta > 0$ such that $g(v,v) > \delta$
for all $p \in M$ and for all $v \in T_p M$ satisfying $|v|=1$ and $g(v,v) > 0$. Hence for small $\epsilon > 0$ there holds 
$\tilde g(v,v) > \delta/2$ for all $p \in M$ and for all $v \in T_p M$ satisfying $|v|=1$ and $\tilde g(v,v) > 0$.

Our goal is to show that given any $\epsilon>0$ sufficiently small, any $p\in M$, and any vectors $\tilde N\in T_pM\setminus 0$ and $\tilde v\in T_pM$ satisfying 
\HOX{No need to normalize $\tilde v$ here}
	\begin{equation}
	\label{N_v_cond}
	\tilde g(\tilde N,\tilde N)=0, \quad \tilde g(\tilde v, \tilde v) > 0, \quad \text{and}\quad \tilde g(\tilde v,\tilde N)=0,
	\end{equation}
	there holds
	\begin{equation}
	\label{R_tilde_neg}
	\widetilde R(\tilde N,\tilde v,\tilde N,\tilde v)<0.
	\end{equation}
It is enough to consider vectors $\tilde N$ and $\tilde v$ that are normalized by $|\tilde N| = |\tilde v| = 1$.
 
Let us show that there is $a \in \R$ of size $\mathcal O(\epsilon)$ such that $N = a \p_t + \tilde N$ satisfies $g(N,N) = 0$.
Solving for $a$ from this equation gives
    \begin{align*}
a = \frac{g(\p_t, \tilde N) \pm \sqrt{g(\p_t, \tilde N)^2 - 4 g(\p_t, \tilde N) g(\tilde N, \tilde N)}}{2 g(\p_t, \p_t)}.
    \end{align*}
We choose the sign that is oposite to the sign of $g(\p_t, \tilde N)$, and $a = \mathcal O(\epsilon)$ follows then from 
    \begin{align*}
g(\tilde N, \tilde N) = \tilde g(\tilde N, \tilde N) + \mathcal O(\epsilon) = \mathcal O(\epsilon).
    \end{align*}
A similar argument shows that there is $b \in \R$ of size $\mathcal O(\epsilon)$ such that $v = b \p_t + \tilde v$ satisfies $g(v,N) = 0$.
Moreover, for small enough $\epsilon > 0$,
    \begin{align*}
g(v,v) = \tilde g(\tilde v, \tilde v) + \mathcal O(\epsilon)
> \delta/2 + \mathcal O(\epsilon) > 0.
    \end{align*}
Now (\ref{Rkappa}) gives for small enough $\epsilon > 0$
	\begin{align*}
\widetilde R(\tilde N,\tilde v,\tilde N,\tilde v) 
= 
R(N,v,N,v) + \mathcal O(\epsilon) 
\leq 
\kappa |N|^2 |v|^2 +\mathcal O(\epsilon) 
= \kappa +\mathcal O(\epsilon) < 0.
	 \end{align*}
\end{proof}

Combining the latter two lemmas, we obtain the following corollary.

\begin{corollary}
	\label{cor_conf_pert}
	Let $(M,g)$ be a Lorentzian manifold of the form \eqref{cylinder}--\eqref{splitting} that satisfies (H1)--(H2). There exists $f\in C^{\infty}(M)$ such that given any smooth Lorentzian metric $\tilde g$ on $M$ that lies in a sufficiently small $C^2(M)$-neighborhood of $g$, the manifold $(M,e^{2f}\tilde g)$ satisfies $(\textrm{H1})^\prime$--(H2) with $(M,g)$ replaced by $(M,e^{2f}\tilde g)$.  
\end{corollary}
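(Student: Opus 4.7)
The plan is to simply compose the two preceding lemmas, with one small continuity check in between.

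First I would apply Lemma~\ref{lem_conf_strict} to the manifold $(M,g)$. Since $(M,g)$ satisfies (H1)--(H2) by hypothesis, this produces a function $f\in C^\infty(M)$ such that the conformally rescaled manifold $(M,e^{2f}g)$ satisfies the strict version $(\textrm{H1})^\prime$ together with (H2). This $f$ is the function whose existence the corollary asserts; it depends only on $(M,g)$ and not on any perturbation.

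Next I would invoke Lemma~\ref{lem_conf_pert_0} applied to the manifold $(M,e^{2f}g)$, which now plays the role of the ``background'' metric satisfying $(\textrm{H1})^\prime$--(H2). The lemma guarantees the existence of an open $C^2$-neighborhood $\mathcal U$ of $e^{2f}g$, inside the space of smooth Lorentzian metrics on $M$, every element of which still satisfies $(\textrm{H1})^\prime$--(H2).

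The only remaining step is the continuity observation that the map $h \mapsto e^{2f}h$ is continuous (in fact linear bounded) on $C^2(M)$ tensor fields, with $f$ a fixed smooth function on the compact manifold $M$. Consequently, the preimage of $\mathcal U$ under this map is a $C^2$-open neighborhood $\mathcal V$ of $g$, and for every smooth Lorentzian $\tilde g \in \mathcal V$ we have $e^{2f}\tilde g \in \mathcal U$, hence $(M,e^{2f}\tilde g)$ satisfies $(\textrm{H1})^\prime$--(H2). Choosing any $C^2$-neighborhood of $g$ contained in $\mathcal V$ and small enough that $\tilde g$ itself remains Lorentzian completes the proof. There is no substantive obstacle here beyond this routine composition; the actual work has been carried out in Lemmas~\ref{lem_conf_strict} and \ref{lem_conf_pert_0}.
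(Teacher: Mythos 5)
Your proposal is correct and is exactly the argument the paper intends: the paper offers no separate proof beyond ``combining the latter two lemmas,'' and your composition of Lemma~\ref{lem_conf_strict} with Lemma~\ref{lem_conf_pert_0}, together with the observation that multiplication by the fixed factor $e^{2f}$ is $C^2(M)$-continuous so that $e^{2f}\tilde g$ stays in the required neighborhood of $e^{2f}g$, is precisely that combination. Nothing is missing.
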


\begin{remark}
	\label{remark_conf_curv}
	Recall that Theorem~\ref{thm_uc} claims a unique continuation property stated on manifolds $(M,\tilde g)$ that as in the hypothesis of the latter corollary lie in a sufficiently small $C^2(M)$-neighborhood of manifolds $(M,g)$ that satisfy (H1)--(H2). Owing to the transformation rule for the wave operator under conformal scalings of the metric (see
	for example \cite{LO21}), it is straightforward to see that Theorem~\ref{thm_uc} follows from Corollary~\ref{cor_conf_pert} after we prove an analogous unique continuation theorem on manifolds $(M,g)$ that satisfy $(\textrm{H1})^\prime$--(H2).
\end{remark}

In view of the latter remark, henceforth we will assume without any loss of generality that $(M,g)$ is a manifold of the form \eqref{cylinder}--\eqref{splitting} that satisfies $(\textrm{H1})^\prime$--(H2). Given any $p \in M^{\textrm{int}}$ we let $e_0,\dots,e_n$ be an orthonormal basis of $T_p M$
in the sense of \cite[Lemma 24, p. 50]{O}, that is,
for distinct $j,k = 0,\dots,n$,
\begin{align*}
g(e_j, e_k)= 0, \quad
g(e_j, e_j) = \epsilon_j,
\end{align*}
where $\epsilon_0 =-1$, and $\epsilon_j=1$ for $j=1,2,\ldots,n$. We define the Lorentzian distance function $r_p$ on $\mathscr E_p$ by the expression
\begin{align}\label{def_r}
r_p(y) = \left( -(y^0)^2 +(y^1)^2+ \dots +(y^n)^2 \right)^{1/2}
\end{align}
in the normal coordinate system $y = (y^0, \dots, y^n) \mapsto \exp_p(y^j e_j)$.

We remark that (H2) implies that $r_p \in C^{\infty}(\mathscr E_p)$. We are ready to state a proposition that  ensures that the level sets of $r_p$ are strictly pseudo-convex. 

\begin{proposition}
\label{th_comp}
Let $(M,g)$ be a Lorentzian manifold of the form \eqref{cylinder}--\eqref{splitting} that satisfies $(\textrm{H1})^\prime$--(H2). Let $p \in M^{\textrm{int}}$ and let $r_p$ be as \eqref{def_r}. Given any point $q \in \mathscr E_p$, there holds 
\begin{equation}
\label{pseudo_convex_eq}
\textrm{Hess}\, r^2_p(N,N)>0, 
\end{equation}
for any nonzero $N\in T_qM$ with $\pair{N,\nabla r_p}=\pair{N,N}=0.$
\end{proposition}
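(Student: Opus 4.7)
The plan is to reduce the claim to Proposition~\ref{prop} by identifying the restriction of $\Hess(r_p^2/2)$ to the $g$-orthogonal complement of $\nabla r_p$ along a radial geodesic with the operator $L$ of that proposition. Fix $q\in\mathscr E_p$ and let $\gamma\colon[0,r_p(q)]\to M$ be the unique unit-speed spacelike geodesic joining $p$ to $q$, which exists by (H2). A computation in normal coordinates combined with the Gauss lemma gives $\pair{\nabla r_p,\nabla r_p}=1$ on $\mathscr E_p$ and $\nabla r_p|_{\gamma(s)}=\gamma'(s)$, so the constraint $\pair{N,\nabla r_p}=0$ is equivalent to $N\in V(s):=\gamma'(s)^\perp$. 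Since $\gamma'$ is spacelike, $V(s)$ is an $n$-dimensional Lorentzian subspace of $T_{\gamma(s)}M$ of signature $(-,+,\ldots,+)$, and the claim becomes $\Hess\rho(N,N)>0$ for every nonzero null $N\in V(s)$, where $\rho:=r_p^2/2$.

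Choose an orthonormal basis $(e_0,\ldots,e_n)$ of $T_pM$ with $e_0$ timelike and $e_1=\gamma'(0)$, parallel-transport it along $\gamma$ to a frame $(E_0(s),\ldots,E_n(s))$, and use the subframe $(E_0(s),E_2(s),\ldots,E_n(s))$ to identify $V(s)\simeq\R^n$ so that the induced Lorentzian inner product matches the Minkowski inner product of Definition~\ref{def_null_neg}. Let $L(s),B(s)\in\R^{n\times n}$ be the matrix representations in this basis of the $g$-symmetric operators on $V(s)$ given by $\pair{L(s)X,Y}=\Hess\rho(X,Y)|_{\gamma(s)}$ and $\pair{B(s)X,Y}=\pair{R(X,\gamma'(s))\gamma'(s),Y}$; the usual curvature symmetries make both Minkowski-symmetric. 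Applying $(\textrm{H1})'$ with $v=\gamma'(s)$ and a nonzero null $N\in V(s)$ yields $\pair{B(s)N,N}<0$, so $B(s)\ND 0$ for every $s\in[0,r_p(q)]$.

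It remains to verify the Riccati equation $sL'(s)+L^2(s)-L(s)+s^2B(s)=0$ with $L(0)=\textrm{id}$. Consider the Jacobi propagator $A(s)\colon V(0)\to V(s)$ sending $w\mapsto J_w(s)$, where $J_w$ is the Jacobi field along $\gamma$ with $J_w(0)=0$ and $\nabla_{\gamma'}J_w(0)=w$; after parallel-transport identification, $A(0)=0$, $A'(0)=\textrm{id}$, and $A''+BA=0$. Along $\gamma$, the identities $\nabla\rho=s\gamma'$ and $\nabla r_p=\gamma'$ give $\Hess\rho(X,Y)=s\,\pair{\nabla_X\nabla r_p,Y}$ for $X,Y\in V(s)$, while the torsion-free identity $\nabla_{J_w}\nabla r_p=\nabla_{\gamma'}J_w=A'(s)w$ yields $L(s)=sA'(s)A(s)^{-1}$ on $(0,r_p(q)]$. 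Differentiating this and using $A''=-BA$ produces the required Riccati equation; moreover, $A''(0)=-B(0)A(0)=0$ together with $A'(0)=\textrm{id}$ gives $A(s)=s\,\textrm{id}+O(s^3)$ and hence $L(s)=\textrm{id}+O(s^2)$, so $L$ extends smoothly to $s=0$ with $L(0)=\textrm{id}$. Proposition~\ref{prop} now applies and yields $-L(s)\ND 0$ on $(0,r_p(q)]$, which translates to $\Hess\rho(N,N)>0$ for every nonzero null $N\in V(s)$; multiplying by two completes the proof. The main subtlety is the rescaling $L=sS$ that converts the singular standard Riccati $S'+S^2+B=0$ satisfied by the shape operator $S=A'A^{-1}$ into the regular ODE $sL'+L^2-L+s^2B=0$ with the matching initial condition $L(0)=\textrm{id}$ required by Proposition~\ref{prop}.
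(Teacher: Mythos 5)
Your argument is correct and follows the same overall strategy as the paper: reduce \eqref{pseudo_convex_eq} to the matrix Riccati comparison of Proposition~\ref{prop} along the radial geodesic from $p$ to $q$, working in a parallel orthonormal frame on $\dot\gamma^\perp$ and using $(\textrm{H1})^\prime$ with $v=\dot\gamma$ to make the curvature matrix $B$ null negative-definite. The only genuine difference is how you obtain the Riccati equation: the paper derives it intrinsically, proving in Lemma~\ref{lem_S} that the shape operator $S$ of $r_p^2/2$ satisfies the radial curvature equation $\nabla_Y S - S + S^2 + R_Y = 0$ and then expressing this in the parallel frame, whereas you represent the rescaled shape operator as $L(s)=s\,A'(s)A(s)^{-1}$ via the Jacobi propagator and differentiate, recovering the same ODE and reading off $L(0)=\mathrm{id}$ and smoothness at $s=0$ from the expansion $A(s)=s\,\mathrm{id}+O(s^3)$. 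Both routes are equivalent; yours gives a very explicit verification of the initial condition, but it requires $A(s)$ to be invertible on $(0,r_p(q)]$, and you should state explicitly that this absence of conjugate points along the radial spacelike geodesics is exactly what (H2) provides, since $\exp_p$ is a diffeomorphism from the spacelike vectors onto $\mathscr E_p$; the paper's computation avoids Jacobi fields altogether by working directly with the function $r_p$, which is smooth on $\mathscr E_p$ for the same reason.
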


Observe now that our unique continuation principle, Theorem~\ref{thm_uc}, follows immediately from combining Proposition~\ref{th_comp} together with \cite[Lemma 5.2]{AFO}, \cite[Theorem 28.3.4]{Ho4} and Remark~\ref{remark_conf_curv}. The rest of this section is devoted to the proof of Proposition~\ref{th_comp}. 
	
\subsection{Radial curvature equation}

The directional curvature operator is defined by
  \begin{align*}
R_v w = R(w,v) v, \quad v,w \in T_x M,\ x \in M,
    \end{align*}
where $R$ is the $(1,3)$ curvature tensor, that is,
for vector fields $X$, $Y$ and $Z$,
    \begin{align*}
R(X,Y)Z = \nabla_X \nabla_Y Z - \nabla_Y \nabla_X Z - \nabla_{[X,Y]} Z.
    \end{align*}
Here $\nabla$ is the covariant derivative. 
We also recall that a function $r : M \to \R$ is called a {\em distance function} if $\pair{dr, dr} = 1$. Writing $\p_r$ for the gradient of $r$ it is straightforward to see that
$$\nabla_{\p_r} \p_r = 0.$$

\begin{lemma}
\label{lem_S}
Let $r : M \to \R$ be a distance function,
and consider the shape operator $S$ corresponding to $r^2/2$,
defined by
    \begin{align*}
\frac12 \Hess r^2(v, w) = \pair{Sv, w},
    \end{align*}
    for all $v,w \in T_xM$ with $\pair{v,\p_r}=\pair{w,\p_r}=0$.
In other words, $SX = \nabla_X Y$, for a vector field $X$ with $\pair{X,Y}=0$, where $Y$ is the gradient of $r^2/2$.
Then $S$ satisfies the radial curvature equation
    \begin{align}\label{radial_curvature}
\nabla_{Y} S -S + S^2 + R_Y = 0,
    \end{align}
    on $Y^{\perp}=\{X\in T_xM\,:\, \pair{X,Y}=0\}$.
\end{lemma}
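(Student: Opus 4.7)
The plan is to reduce the equation \eqref{radial_curvature} to the classical radial curvature equation (a Riccati-type identity) for the shape operator of the distance function $r$ itself, and then scale by the factor $r$ coming from $Y = \nabla(r^2/2)$.

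First I would identify $Y$ explicitly. Since $r$ is a distance function, its gradient $\p_r$ satisfies $\pair{\p_r,\p_r}=1$ and $\nabla_{\p_r}\p_r=0$, so $\p_r$ is the velocity field of a congruence of unit-speed geodesics. For any vector $X$ one has $\pair{Y,X}=X(r^2/2)=r\,X(r)=r\pair{\p_r,X}$, so $Y=r\p_r$ on all of $M$. Introducing the auxiliary operator $\tilde S X = \nabla_X \p_r$ on $\p_r^\perp=Y^\perp$, a direct product-rule computation gives, for $X\in Y^\perp$,
\begin{equation*}
S X = \nabla_X Y = X(r)\p_r + r\nabla_X\p_r = r\tilde S X,
\end{equation*}
since $X(r)=\pair{X,\p_r}=0$. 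Note $\tilde S$ maps $\p_r^\perp$ into itself because differentiating $\pair{\p_r,\p_r}=1$ in direction $X$ gives $\pair{\tilde S X,\p_r}=0$.

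Next I would establish the classical radial curvature equation for $\tilde S$, namely $\nabla_{\p_r}\tilde S + \tilde S^2 + R_{\p_r} = 0$ on $\p_r^\perp$. This is essentially a restatement of the Jacobi/Riccati equation. Pick an arbitrary vector $X_q\in \p_r^\perp$ at a point $q$ and extend it to a vector field $X$ with $[X,\p_r]=0$ near $q$ (for instance by geodesic transport along $\p_r$). From the definition of $R$ and $\nabla_{\p_r}\p_r=0$ one gets
\begin{equation*}
R(X,\p_r)\p_r = -\nabla_{\p_r}\nabla_X \p_r = -\nabla_{\p_r}(\tilde S X),
\end{equation*}
and since $\nabla_{\p_r}X = \nabla_X \p_r = \tilde S X$ (using $[X,\p_r]=0$) this rearranges to $(\nabla_{\p_r}\tilde S)X + \tilde S^2 X + R_{\p_r}X = 0$. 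The identity is tensorial in $X$, so the freedom in the extension is harmless.

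Finally I would simply scale. Using $\nabla_Y = r\nabla_{\p_r}$, $\p_r(r)=1$, and $R_Y = r^2 R_{\p_r}$, the product rule yields
\begin{equation*}
\nabla_Y S = r\nabla_{\p_r}(r\tilde S) = r\tilde S + r^2\nabla_{\p_r}\tilde S = S + r^2\nabla_{\p_r}\tilde S,\qquad S^2 = r^2\tilde S^2,
\end{equation*}
so that
\begin{equation*}
\nabla_Y S - S + S^2 + R_Y = r^2\bigl(\nabla_{\p_r}\tilde S + \tilde S^2 + R_{\p_r}\bigr) = 0
\end{equation*}
on $Y^\perp$, as required. There is no real obstacle here beyond careful bookkeeping; the only thing to watch is that all the manipulations are performed on $Y^\perp$ (where $S$ is defined) and that $\nabla_Y$ preserves this subbundle along $\p_r$, which follows from $\nabla_{\p_r}\p_r=0$ together with $\p_r(r)=1$.
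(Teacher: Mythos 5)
Your argument is correct, but it is organized differently from the paper's. You factor everything through the unit shape operator $\tilde S X = \nabla_X \p_r$: you first derive the classical radial curvature (Riccati) equation $\nabla_{\p_r}\tilde S + \tilde S^2 + R_{\p_r} = 0$ on $\p_r^\perp$ by extending a test vector to a field commuting with $\p_r$, and then obtain \eqref{radial_curvature} by the rescalings $S = r\tilde S$, $S^2 = r^2\tilde S^2$, $R_Y = r^2 R_{\p_r}$, $\nabla_Y = r\nabla_{\p_r}$, which neatly explains where the extra $-S$ term and the $r^2$ weight come from. The paper instead computes directly with $Y=\nabla(r^2/2)$ and an arbitrary extension $X$: it expands $(\nabla_Y S)X + S^2X = \nabla_Y\nabla_X Y + \nabla_{[X,Y]}Y$, invokes the definition of $R(X,Y)Y$, and uses the single identity $\nabla_Y Y = Y$, never introducing $\tilde S$ or a commuting extension; it also records explicitly that $S$ and $R_Y$ preserve $Y^\perp$. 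Your route buys a transparent link to the standard Riccati equation for distance functions at the cost of the commuting-extension device (note that the extension giving $[X,\p_r]=0$ is obtained by Lie transport along the flow of $\p_r$, not parallel transport, though since the final identity is tensorial this choice is immaterial); the paper's route is a single self-contained tensorial computation. One small point worth making explicit in your write-up, which you only gesture at: differentiating the endomorphism $\tilde S$ (and the identity $S=r\tilde S$) along $\p_r$ within the subbundle $\p_r^\perp$ is legitimate precisely because $\nabla_{\p_r}\p_r=0$ makes this subbundle parallel along the radial geodesics.
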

\begin{proof}
First, let us show that $S$ and $R_Y$ are closed on $Y^\perp$. Observe that $Y = r \p_r$. Hence given any $X \in Y^\perp$, there holds
$$ \pair{SX,Y}= \pair{\nabla_XY,Y} = \frac{1}{2} X(\pair{Y,Y})= \frac12 X(r^2) = rX(r) = \pair{X,Y} =0,$$
thus proving that $SX \in Y^\perp$. Note also that
$$ \pair{R_Y X,Y}=\pair{R(X,Y)Y,Y}=0,$$
by anti-symmetry of $R$ in its last two indices. Thus, $R_YX \in Y^\perp$ as well.
Next, we write
    \begin{align*}
(\nabla_{Y} S)X + S^2 X 
&= 
\nabla_{Y}(SX) - S\nabla_{Y}X + S^2 X 
\\&= 
\nabla_{Y}\nabla_X Y - \nabla_{\nabla_{Y}X} Y 
+ \nabla_{\nabla_X Y} Y
= \nabla_Y \nabla_X Y + \nabla_{[X,Y]} Y.
    \end{align*}
On the other hand,
    \begin{align*}
R_{Y}X = R(X,Y)Y 
= \nabla_X \nabla_Y Y - \nabla_Y \nabla_X Y - \nabla_{[X,Y]}Y.
    \end{align*}
Moreover, there holds 
    \begin{align*}
\nabla_Y Y = r \nabla_{\p_r}(r \p_r)
= r \p_r + r^2 \nabla_{\p_r} \p_r = Y.
    \end{align*}
Thus $\nabla_X \nabla_Y Y = SX$
and
    \begin{align*}
(\nabla_{Y} S)X + S^2 X 
= -R_Y X + SX.
    \end{align*}
\end{proof}

\subsection{The comparison result on $(M,g)$}

Let us begin by showing that $r_p$, as defined in \eqref{def_r}, is a distance function on the subset $\mathscr E_p$.
Consider the local hyperquadric 
    \begin{align*}
Q = \{\omega \in T_p M : \pair{\omega,\omega} = 1\}.
    \end{align*}
In the region $\mathscr E_p$, we consider the polar coordinates $y = r\omega$ with $r>0$ and $\omega \in Q$, 
where $y=(y^0, \dots, y^n)$ is as in \eqref{def_r}.
The Gauss lemma, see e.g. \cite[Lemma 1, p. 127]{O}, implies that in these coordinates, with $r$ having the index $n$,
the metric tensor has the form
    \begin{align*}
\begin{pmatrix}
h(r\omega) & 0
\\
0 & 1
\end{pmatrix}.
    \end{align*}
As $r_p(y) = r$ for $y = r\omega$, 
it follows that $\pair{dr_p, dr_p} = 1$.

\begin{proof}[Proof of Proposition~\ref{th_comp}]
We will prove inequality \eqref{pseudo_convex_eq} at an arbitrary point $q \in \mathscr E_p$, by using the Riccati equation \eqref{radial_curvature} associated to the distance function $r_p$ along the radial geodesic segment $\gamma$ that connects the point $p$ to $q$. We write $p=\gamma(0)$ and consider an orthonormal frame $\{\tilde{e}_j(0)\}_{j=0}^{n-1}$ on $\dot{\gamma}(0)^\perp$ in the sense that
for distinct $j,k = 0,\dots,n-1$,
    \begin{align*}
\pair{\tilde{e}_j, \tilde{e}_k} = 0, \quad
\pair{\tilde{e}_j, \tilde{e}_j} = \epsilon_j,
    \end{align*}
where $\epsilon_0 =-1$, and $\epsilon_j=1$ for $j=1,2,\ldots,n-1$. For each $j=0,1,\ldots,n-1$, we define $\tilde{e}_j(s) \in \dot{\gamma}(s)^\perp$ to be the parallel transport of the vector $\tilde{e}_j(0)$ along $\gamma$ from $\gamma(0)$ to $\gamma(s)$. Note that for all $v \in \dot{\gamma}^\perp$,
    \begin{align*}
v = \sum_{j=0}^{n-1} \epsilon_j\, \pair{v, \tilde{e}_j}\, \tilde{e}_j.
    \end{align*}
In particular, 
the matrix $\tilde S$ of a linear map $S$ on $\dot \gamma^\perp$,
defined by $S\tilde{e}_k = \tilde S^j_k \tilde{e}_j$, satisfies
$\tilde S^j_k = \epsilon_j \pair{S \tilde{e}_k, \tilde{e}_j}$. Let us now take $S$ as in Lemma~\ref{lem_S} restricted to the geodesic segment $\gamma$ and use the abbreviated notation $r$ in place of $r_p$. Note that the radial curvature equation (\ref{radial_curvature}) implies on $\gamma$
    \begin{multline*}
r\p_r \epsilon_j \pair{S\tilde{e}_j, \tilde{e}_k} = \epsilon_j \pair{(\nabla_{r\p_r} S)\tilde{e}_j, \tilde{e}_k}\\
= \epsilon_j \pair{S\tilde{e}_j, \tilde{e}_k}
- \epsilon_j \pair{S^2 \tilde{e}_j, \tilde{e}_k}
- r^2 \epsilon_j \pair{R_{\p_r} \tilde{e}_j, \tilde{e}_k}.
    \end{multline*}
Thus the matrix $\tilde S^j_k$ of $S$
satisfies the Riccati equation 
    \begin{align}\label{Riccati}
r \tilde S' - \tilde S + \tilde S^2 + r^2 \tilde R = 0,
    \end{align}
with $\tilde R$ the matrix of $R_{\p_r}$ on $\dot{\gamma}^\perp$ with respect to the frame $\{\tilde e_j\}_{j=0}^{n-1}$ (note that in the flat case, with the metric tensor
$\sum_{j=0}^n\epsilon_j (dx^j)^2$,
there holds $\tilde S = \textrm{id}$). In general, $\tilde S(0) = \textrm{id}$.
Moreover, the curvature bound $(\textrm{H1})^\prime$ implies that the matrix $\tilde R$ is null negative definite in the sense of Definition~\ref{def_null_neg}. The proof is completed, thanks to Proposition~\ref{prop} with $L=\tilde S$ and $B=\tilde R$. 
\end{proof}

\ifoptionfinal{}{
}
\end{document}